\def\vem{\vspace{1em}}
\def\hem{\hspace{0.5em}}
\def\R {\mathbb{R}}
\def\PunS{\R^d\backslash\{0\}}
\def\eps{\varepsilon}
\def\Sd{\mathcal{S}_d}
\def\LSet{\Lambda(u)}
\def\PosS{\{u>0\}}
\def\GC{\mathcal{G}_c}
\def\PC{\mathcal{P}_c}
\def\Q{\mathcal{Q}}
\def\Sir{\Psi_r}
\def\SiR{\Psi_{\bar{r}}}
\newtheorem{thm}{Theorem}[section]
\newtheorem{prop}[thm]{Proposition}
\newtheorem{cor}[thm]{Corollary}
\newtheorem{lem}[thm]{Lemma}
\theoremstyle{definition}
\newtheorem{rem}[thm]{Remark}
\numberwithin{equation}{section}
\title[Compact Contact Sets]{Solutions to the nonlinear obstacle problem with compact contact sets} 
\author{Simon Eberle}
\address{Basque Center for Applied Mathematics, Bilbao, Spain}
\email{seberle@bcamath.org}
\author{Hui Yu}
\address{Department of Mathematics,	National University of Singapore, Singapore}
\email{ huiyu@nus.edu.sg}
\begin{document}

\begin{abstract}
For the obstacle problem with a nonlinear operator, we characterize the space of global solutions with compact contact sets. This is achieved by constructing  a bijection onto a class of quadratic polynomials describing the asymptotic behavior of  solutions. 
\end{abstract}

\maketitle
\section{Introduction}
Solutions to elliptic problems in the entire space often exhibit rigidity properties. This is exemplified by the Liouville theorem, which says that global harmonic functions with polynomial growths are indeed polynomials. In particular, such solutions form a finite dimensional space. 

In this article, we explore the rigidity of the nonlinear obstacle problem. 

For an integer $d\ge3$,  let $\mathcal{S}_d$ denote the space of $d$-by-$d$ symmetric matrices, and suppose that $F:\mathcal{S}_d\to\R$ is an elliptic operator that is $C^1$ and convex. We study \textit{global solutions} to the \textit{nonlinear obstacle problem}, namely, 
\begin{equation}
\label{EqObP}
\begin{cases}
F(D^2u)\le 1 &\text{ in }\R^d,\\
u\ge 0 &\text{ in }\R^d,\\
F(D^2u)=1 &\text{ in }\PosS.
\end{cases}
\end{equation} 
Given a solution $u$, we refer to its zero set as the \textit{contact set}, to be denoted as 
\begin{equation}
\label{EqConS}
\Lambda(u):=\{u=0\}.
\end{equation} 

For the theory of fully nonlinear elliptic operators, the reader may consult Caffarelli-Cabr\'e \cite{CC}. For an introduction to the obstacle problem, the reader may refer to Caffarelli \cite{C} and Petrosyan-Shahgholian-Uraltseva \cite{PSU}. The nonlinear obstacle problem was first studied by Lee \cite{L}. See also Savin-Yu \cite{SY1, SY2} for recent developments.

\vem 

When $F$ is the trace\footnote{That is, when the operator is the Laplacian.},  the system \eqref{EqObP} is known as the \textit{classical obstacle problem}, that is,
\begin{equation}
\label{EqIntroClassicalObP}
\Delta u\le 1, \hem u\ge 0 \text{ in }\R^d, \hem \text{ and }\Delta u=1 \text{ in }\PosS.
\end{equation} 
Global solutions for this problem have been completely classified. This was first done for solutions with compact contact sets by Dive \cite{Di}, DiBenedetto-Friedman \cite{DF} and Friedman-Sakai \cite{FSa}. See Eberle-Weiss \cite{EW} for a short proof. Without the compactness of $\LSet$, the classification was established by Sakai \cite{Sa} in two dimensions. For solutions in general dimensions with non-compact contact sets, the classification was recently obtained by Eberle-Shahgholian-Weiss \cite{ESW} and Eberle-Figalli-Weiss \cite{EFW}, resolving a conjecture posed by Shahgholian \cite{Sh} and Karp-Margulis \cite{KM}.

Thanks to these works, we know that for the classical obstacle problem, a global solution can be expanded as
\begin{equation}
\label{EqIntroExpansion}
u=p+v,
\end{equation}
where $p$ is a quadratic polynomial and $v$ is a (generalized) Newtonian potential. Geometrically, the family of contact sets consists of limits of ellipsoids.

A closely related problem is the \textit{thin obstacle problem}, namely,
\begin{equation}
\label{EqIntroTObP}
\Delta u\le 0 \text{ in }\R^d,\hem
u\ge 0 \text{ on }\{x_d=0\}, \hem \text{ and }
\Delta u=0 \text{ in }\{x_d\neq0\}\cup\PosS.
\end{equation} 
For this problem, a similar expansion as in \eqref{EqIntroExpansion} has been established by Eberle, Ros-Oton and Weiss \cite{ERW} for global solutions with compact contact sets. For sub-quadratic solutions, compact contact sets have been fully characterized by Eberle-Yu \cite{EY}.

\vem

In these works on \eqref{EqIntroClassicalObP} and \eqref{EqIntroTObP}, two crucial ingredients are monotonicity formulae and the explicit relation between a set and its Newtonian potential. Due to its nonlinearity, these tools no longer apply to our problem \eqref{EqObP}. 

As a first step towards a complete classification, in this work we restrict our attention to solutions with compact contact sets. To simplify our exposition, we denote the class of such solutions by
\begin{equation}
\label{EqGC}
\GC:=\{u:\hem u \text{ solves }\eqref{EqObP} \text{ in }\R^d, \hem \text{ and }\Lambda(u)\neq \emptyset \text{ is compact}\}.
\end{equation} 

With the absence of invariance properties of the Laplacian, a geometric characterization of contact sets seems out of reach. Nevertheless, for solutions in $\GC$, we are able to establish an expansion like  \eqref{EqIntroExpansion}. In particular,  the space $\GC$ is finite dimensional, a form of rigidity of the nonlinear obstacle problem  \eqref{EqObP}.

To be precise, we introduce the class of quadratic polynomials, $\PC$, that appear in the expansion. 
For a quadratic polynomial $Q$, we say that 
\begin{equation}
\label{EqPC}
Q\in\PC
\end{equation} 
if it satisfies the following conditions:
\begin{enumerate}
\item{The polynomial is of the form $Q=\frac12 (x-\overline{x})\cdot A(x-\overline{x})+a$ for some $A\in\Sd$, $\overline{x}\in\R^d$ and $a\le0;$}
\item{The coefficient matrix $A$ satisfies $F(A)=1$; and }
\item{Eigenvalues of $A$ are strictly positive.}
\end{enumerate}

With this, our main result reads
\begin{thm}
\label{ThmMainThm}
For $d\ge3$, suppose that $F$ is a uniformly elliptic operator that is $C^1$ and convex, then 
there is a bijection $\Phi:\GC\to\PC$ such that 
$$
|u-\Phi(u)|(x)= O(|x|^{2-d})\text{ as }|x|\to\infty.
$$
\end{thm}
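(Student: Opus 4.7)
The plan is to prove Theorem \ref{ThmMainThm} in three stages: construct $\Phi$ by identifying the asymptotic quadratic polynomial of each $u \in \GC$, verify injectivity (which will also imply the membership claim $\Phi(u) \in \PC$), and invert $\Phi$ by solving obstacle problems in large balls with prescribed boundary data $Q$.

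Fix $u \in \GC$. Since $\LSet$ is compact, $u$ is a classical solution of $F(D^2 u) = 1$ in an exterior domain $\R^d \setminus B_{R_0}$. Radial $F$-paraboloids serve as upper and lower barriers to show $u$ grows quadratically at infinity, so the blow-downs $u_R(x) := u(Rx)/R^2$ are uniformly bounded on compacta. Convexity of $F$ together with Evans--Krylov theory yields $C^{2,\alpha}_{loc}$ estimates away from the shrinking set $\LSet/R$, so some subsequence converges to a global solution $u_\infty$ of \eqref{EqObP} with contact set $\{0\}$. A Liouville-type classification---exploiting $u_\infty \ge 0$, quadratic growth, and convexity of $F$---forces $u_\infty(x) = \tfrac12 x \cdot A x$ with $F(A) = 1$ and $A \ge 0$; compactness of $\LSet$ (whose blow-down is the singleton $\{0\}$) upgrades $A$ to strict positivity. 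The limit Hessian is unique across subsequences by the $C^\alpha$ convergence of $D^2 u$ in an exterior domain, so one reads off $u(x) = \tfrac12 (x - \bar x) \cdot A (x - \bar x) + a + o(1)$ as $|x|\to\infty$, and sets $\Phi(u) := Q$, this polynomial.

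To upgrade $o(1)$ to $O(|x|^{2-d})$, observe that $w := u - Q$ satisfies outside $B_{R_0}$ the linear uniformly elliptic equation
\begin{equation*}
Lw := \Big(\int_0^1 F'\bigl(t D^2 u + (1-t) A\bigr)\,dt\Big) : D^2 w = 0,
\end{equation*}
obtained from $F(D^2 u) - F(A) = 0$ via a mean-value identity; comparison with $\pm C|x|^{2-d}$ then yields $|w|(x) \le C|x|^{2-d}$. For injectivity, suppose $u_1, u_2$ are global solutions of \eqref{EqObP} with $u_1 - u_2 \to 0$ at infinity. In $\{u_1 > u_2\} \subset \{u_1 > 0\}$, $F(D^2 u_1) = 1 \ge F(D^2 u_2)$, and convexity of $F$ yields
\begin{equation*}
F'(D^2 u_1) : D^2 (u_1 - u_2) \ge F(D^2 u_1) - F(D^2 u_2) \ge 0,
\end{equation*}
so $u_1 - u_2$ is a subsolution of a uniformly elliptic linear operator that vanishes on $\partial \{u_1 > u_2\}$ and at infinity; the maximum principle forces $\{u_1 > u_2\} = \emptyset$, and by symmetry $u_1 = u_2$. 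As a corollary, $a \le 0$ for every $u \in \GC$: if $a > 0$, then $Q \ge a > 0$ would make $Q$ itself a global solution of \eqref{EqObP} with empty contact set and asymptotic polynomial $Q$; applying the injectivity argument with $u_1 = u,\ u_2 = Q$ would force $u = Q$, contradicting $\LSet \ne \emptyset$. Hence $\Phi(u) \in \PC$.

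For surjectivity, given $Q \in \PC$ with $a = 0$ one takes $u = Q$ itself, whose contact set is the singleton $\{\bar x\}$. When $a < 0$, I solve the obstacle problem on $B_R$ with boundary data $Q|_{\partial B_R}$, positive for $R$ large since $A > 0$. Standard theory yields a unique $C^{1,1}$ solution $u_R$; comparison with the supersolution $\max(Q, 0)$ forces $u_R = 0$ wherever $Q < 0$, so $\Lambda(u_R)$ contains a neighborhood of $\bar x$ and is in particular nonempty. Uniform interior estimates give $u_R \to u$ in $C^{1,\alpha}_{loc}$, and a uniform-in-$R$ application of the linear-barrier decay shows $\LSet$ is compact with $u - Q = O(|x|^{2-d})$, so $u \in \GC$ and $\Phi(u) = Q$. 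I anticipate the principal obstacle to be the uniform bound on the diameter of $\Lambda(u_R)$ as $R \to \infty$; without such control the limit contact set could fail to be compact. Since $F$ need not be rotationally invariant, this control must be obtained via sandwich comparison with ellipsoidal $F$-solutions tuned by $A$, rather than truly radial barriers. A secondary difficulty is the Liouville classification for the blow-down $u_\infty$, but it seems accessible from convexity of $F$ combined with Evans--Krylov regularity.
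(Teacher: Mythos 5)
Your proposal follows the same overall architecture as the paper (blow-down to identify the quadratic part, comparison principle for injectivity, approximation by obstacle problems on $B_R$ for surjectivity), but it contains two substantive gaps in the steps you treat as routine.

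The first gap is in the construction of $\Phi$. You claim the decay $|u-Q|=O(|x|^{2-d})$ follows by writing $F(D^2u)-F(A)=0$ as $Lw=0$ for a variable-coefficient linear elliptic operator $L$ and then ``comparing with $\pm C|x|^{2-d}$.'' But $|x|^{2-d}$ is harmonic, not a supersolution of a general variable-coefficient $L$ whose coefficients merely converge to $I$ at infinity with no rate; the error term in the linearization has no sign and no a priori decay, so the comparison you propose does not close. This is precisely the content of the Li--Li--Yuan exterior Liouville theorem (Theorem~\ref{ThmLLY}), which the paper invokes directly: from $\sup|D^2v|<\infty$, $v$ has a quadratic polynomial asymptotic profile with $O(|x|^{2-d})$ error. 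This theorem also resolves your uniqueness-of-blow-down issue at the same stroke; your heuristic ``$C^\alpha$ convergence of $D^2u$ in an exterior domain'' is not a substitute. Relatedly, your claim that compactness of $\LSet$ (blow-down $\{0\}$) already yields $A>0$ is premature: Lemma~\ref{LemBlowDown} only gives $A\ge 0$. Strict positivity is deduced in Proposition~\ref{ThisProposition}, Step 4, after the full asymptotic expansion and the sign of the constant are known.

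The second gap is in surjectivity, and it is the harder one. After establishing $Q\le u_R\le\frac12 x\cdot Ax$ (which, by the way, gives $\Lambda(u_R)\subset\{Q\le 0\}$ and $0\in\Lambda(u_R)$ --- not ``$u_R=0$ wherever $Q<0$,'' which is false; $\max(Q,0)$ is not even a viscosity supersolution across its kink), you can pass to the limit and apply the asymptotic expansion to get $u_\infty=Q+a+E$ with $0\le a\le 1$ and $E=O(|x|^{2-d})$. But you must still kill the constant $a$, otherwise $\Phi(u_\infty)=Q+a\ne Q$. Your ``uniform-in-$R$ linear-barrier decay'' does not do this for the same reason as above. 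The paper handles it with a bespoke supersolution $\Sir(x)=r^{1/2}|x|^{-1/2}$: its Laplacian is strictly negative of size $\sim|x|^{-5/2}$, which absorbs the nonlinear remainder $\omega(|D^2\Sir|)|D^2\Sir|$ once $r$ is large, so $G(D^2\SiR)\le 0$ outside $B_{\bar r}$. Comparing $v_R=u_R-Q$ with $\SiR$ on the annulus $B_R\setminus B_{\bar r}$ and sending $R\to\infty$ yields $u_\infty-Q\le\bar r^{1/2}|x|^{-1/2}\to 0$, forcing $a=0$. Without this (or an equivalent) barrier, your surjectivity argument is incomplete; and your worry about the diameter of $\Lambda(u_R)$ is, in contrast, automatically handled by the inclusion $\Lambda(u_R)\subset\{Q\le 0\}$.

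Your injectivity argument is essentially the paper's comparison principle spelled out, and your shortcut deriving $a\le 0$ from injectivity applied to the pair $(u,Q)$ is valid and slightly slicker than the paper's Step 3, though equivalent in substance.
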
 

\begin{rem}
If we let $Q:=\Phi(u)$ and $v:=u-Q$, then $v$ 
 solves a nonlinear obstacle problem with $-Q$ as the obstacle, that is, 
$$
G(D^2v)\le 0, \hem v\ge-Q \text{ in }\R^d, \hem\text{ and }G(D^2v)=0 \text{ in }\{v>-Q\}.
$$
Here $G:\mathcal{S}_d\to\R$ is the operator defined as
$$
G(M):=F(M+D^2Q)-1 \text{ for }M\in\mathcal{S}_d.
$$
\end{rem}

\vem
This short note is structured as follows: In Section 2, we collect some preliminaries. In Section 3, we construct the map $\Phi$ in Theorem \ref{ThmMainThm} and establish its bijectivity.

\section{Preliminaries}
In this section, we collect  some properties of solutions to the nonlinear obstacle problem \eqref{EqObP}. 

For the operator $F$, we assume the following:
\begin{enumerate}
\item{The operator $F$ is uniformly elliptic, that is, there is a constant $\Lambda\in[1,+\infty)$ such that 
$$
\frac{1}{\Lambda}\|P\|\le F(M+P)-F(M)\le\Lambda\|P\|
$$
for all $M,P\in\Sd$ and $P\ge0;$}
\item{The operator $F$ is convex; and }
\item{The operator $F$ is $C^1$.}
\end{enumerate}

\vem

We begin with the comparison principle for \eqref{EqObP}. Solutions, as well as subsolutions and supersolutions,  are understood in the viscosity sense \cite{CC}. In particular, they are continuous. 
\begin{prop}
\label{PropComparison}
Suppose that  $u$ solves the nonlinear obstacle problem \eqref{EqObP} in $B_1\subset\R^d$. 

1) If $v$ solves \eqref{EqObP} in $B_1$ with 
$$
v\ge u \text{ on }\partial B_1,
$$
then
$$
v\ge u \text{ in } B_1.
$$

2) If $w$ satisfies 
$$
F(D^2w)\ge 1\text{ in }B_1, \hem \text{ and }w\le u \text{ on }\partial B_1,
$$
then 
$$
w\le u \text{ in }B_1.
$$
\end{prop}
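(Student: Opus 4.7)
The plan is to prove part (2) first as a direct instance of the standard viscosity comparison principle for the equation $F(D^2 \cdot)=1$, and then to reduce part (1) to this comparison by the classical obstacle-problem trick of working on the non-coincidence set.

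For part (2): by \eqref{EqObP}, $u$ is a viscosity supersolution of $F(D^2\cdot)=1$ in $B_1$, and by hypothesis $w$ is a viscosity subsolution of the same equation. Since $F$ is uniformly elliptic and $C^1$, the standard viscosity comparison principle (see Caffarelli--Cabr\'e \cite{CC}) gives $w\le u$ in $B_1$ once we know $w\le u$ on $\partial B_1$. The only subtlety is that the equation has no zero-order term. This is handled by the usual quadratic perturbation: set
$$
w_\eps(x) := w(x) + \frac{\eps}{2d\Lambda}\bigl(|x|^2 - 1\bigr).
$$
By the uniform ellipticity assumption, $w_\eps$ is a strict viscosity subsolution, $F(D^2 w_\eps)\ge 1 + \eps/(d\Lambda^2)$, and $w_\eps \le w \le u$ on $\partial B_1$. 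A strict-subsolution-versus-supersolution comparison is immediate (any interior maximum of $w_\eps - u$ contradicts the strict inequality via the definition of viscosity solutions), and then sending $\eps\to 0$ concludes.

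For part (1): suppose toward contradiction that the open set $D:=\{x\in B_1:u(x)>v(x)\}$ is non-empty. If $x\in\partial D$ with $x\in B_1$, continuity forces $u(x)=v(x)$; if instead $x\in\partial D\cap\partial B_1$, the boundary hypothesis gives $u(x)\le v(x)$. Thus $u\le v$ on $\partial D$. On $D$ itself we have $u>v\ge 0$, so $u>0$ throughout $D$, and hence $F(D^2 u)=1$ in $D$ in the viscosity sense, making $u$ a subsolution there. Meanwhile $v$ satisfies $F(D^2 v)\le 1$ in $B_1\supset D$, so $v$ is a supersolution of the same equation on $D$. Applying the sub-/supersolution comparison from part (2) on $D$ yields $u\le v$ in $D$, contradicting the definition of $D$.

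The main (and only) obstacle is the absence of zero-order dependence in $F(D^2\cdot)=1$, which prevents a naive ``maximum of $u-w$'' argument; this is overcome by the quadratic perturbation described above, which is the standard device for this class of operators. Everything else reduces to continuity and the basic structure of the obstacle problem.
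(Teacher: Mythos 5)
Your proof of part (2) reaches the same conclusion as the paper (a direct appeal to the viscosity comparison principle for $F(D^2\cdot)=1$), but you add a quadratic-perturbation preamble. Two small remarks: the paper simply cites the comparison principle from \cite{CC} without detail, and your claim that the strict-subsolution comparison is ``immediate'' is a bit too quick for viscosity solutions --- when neither function is $C^2$, one still needs doubling of variables or Jensen's lemma at the maximum of $w_\eps-u$, or alternatively the $C^{1,1}$ regularity of $u$ from Proposition~\ref{PropC11} plus Alexandrov's theorem. None of this is a real gap (the comparison principle is standard), but ``immediate'' overstates the case.

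For part (1) you take a genuinely different route from the paper. The paper slides $v$ up by the minimal constant $t$ until $v+t$ touches $u$ from above, shows that the touching point must lie in the interior of $\{u>0\}$, and derives a contradiction there via the comparison principle (which, read carefully, requires the \emph{strong} maximum principle, since $v+t\ge u$ already holds globally and the contradiction is that interior tangency forces the two to coincide on a component, which is incompatible with the strict ordering on that component's boundary). You instead restrict to the open set $D=\{u>v\}$, observe that on $D$ the obstacle constraint for $u$ is inactive so $F(D^2u)=1$ there, and apply the \emph{weak} comparison principle from part (2) on $D$ with $u\le v$ on $\partial D$. Your version is cleaner in that it only needs the weak comparison principle and avoids the implicit appeal to the strong maximum principle; the paper's version is more compact to state. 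Both arguments are correct.
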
 

\begin{proof}
The second statement is a consequence of the  comparison principle as we have 
$$
F(D^2u)\le 1\le F(D^2w) \text{ in }B_1.
$$
It remains to prove the first statement. 

By continuity, we find $t\in\R$ such that 
$$
v+t\ge u \text{ in }\overline{B_1}, \text{ and }v(x_0)+t=u(x_0) \text{ at some }x_0\in\overline{B_1}.
$$
It suffices to show that 
$
t\le0.
$

Suppose $t>0$, then  with the comparison on $\partial B_1$, we have $x_0\in B_1$. With $v\ge0$, we have $x_0\in\PosS.$ This contradicts the comparison principle in the domain $\PosS$, where $F(D^2u)=1\ge F(D^2v)$.
\end{proof} 

As a corollary, we have
\begin{cor}
\label{CorComparisonForGlobal}
Suppose that $u$ and $v$ are two global solutions to the nonlinear obstacle problem \eqref{EqObP} with 
$$
\liminf_{|x|\to\infty} (u-v)(x)\ge0,
$$
then 
$$
u\ge v \text{ in }\R^d.
$$
\end{cor}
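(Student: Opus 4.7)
The plan is to mimic the sliding argument of Proposition \ref{PropComparison}, with the liminf condition at infinity replacing the boundary inequality. Setting $m := \inf_{\R^d}(u-v)$, I argue by contradiction and suppose $m<0$. The hypothesis $\liminf_{|x|\to\infty}(u-v)\ge 0$ then forces $u-v \ge m/2 > m$ outside some ball $B_R$, so by continuity $m$ is attained at some $x_0$, where $u(x_0)=v(x_0)+m$. Since $u(x_0)\ge 0$ and $m<0$, this forces $v(x_0)\ge -m>0$, placing $x_0$ in the open set $\{v>0\}$.

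In $\{v>0\}$, $v$ is a classical $C^{2,\alpha}$ solution of $F(D^2v)=1$ (by Evans-Krylov), and the shift $v+m$ solves the same equation. Globally $u\ge v+m$ with equality at the interior point $x_0$, and $u$ is a viscosity supersolution of $F(D^2u)=1$ everywhere. As in the proof of Proposition \ref{PropComparison} part 1, the strong comparison principle for the convex equation $F(\cdot)=1$, applied on the connected component $\Omega$ of $\{v>0\}$ containing $x_0$, forces $u\equiv v+m$ on $\Omega$. Concretely, the convexity of $F$ combined with the classical regularity of $v+m$ turns the one-sided inequality $F(D^2u)\le 1$ into the viscosity inequality $\langle DF(D^2(v+m)),\,D^2(u-(v+m))\rangle \le 0$, a uniformly elliptic linear inequality for $w:=u-(v+m)\ge 0$ with continuous coefficients; since $w(x_0)=0$, the strong minimum principle gives $w\equiv 0$ on $\Omega$.

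Two cases complete the contradiction. If $\Omega\ne\R^d$, then $\partial\Omega$ is nonempty and contained in $\{v=0\}$, so by continuity $u=v+m=m<0$ on $\partial\Omega$, contradicting $u\ge 0$. If instead $\Omega=\R^d$, then $v>0$ everywhere and $u-v\equiv m<0$ on all of $\R^d$, contradicting the liminf hypothesis. The only delicate step is the strong comparison invocation in the middle paragraph; this is where the convexity of $F$ is essential, as it produces the linear elliptic inequality needed to unlock the strong minimum principle for the difference. Everything else is a bookkeeping exercise made possible by the fact that, thanks to the liminf condition, the would-be touching point cannot escape to infinity.
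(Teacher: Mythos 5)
Your proof is correct, but it takes a genuinely different route from the paper's. The paper's argument is a two-line reduction to Proposition~\ref{PropComparison}: for any fixed $x_0$ and any $\eps>0$, the liminf hypothesis gives $u\ge v-\eps$ on $\partial B_R$ for $R$ large, so the local comparison principle yields $u(x_0)\ge v(x_0)-\eps$, and one sends $\eps\to 0$. In effect, the paper treats Proposition~\ref{PropComparison} as a black box and exhausts $\R^d$ by balls; the sliding and strong-maximum-principle work is all hidden inside that proposition. You instead run the global sliding argument from scratch: set $m=\inf(u-v)$, use the liminf hypothesis to trap a touching point $x_0$ in a compact set, linearize $F$ around the classical solution $v$ on the open set $\{v>0\}$ using convexity and Evans--Krylov, and then invoke the strong minimum principle for the resulting uniformly elliptic linear inequality to propagate $u-(v+m)\equiv 0$ across the component $\Omega$, finishing by a dichotomy on whether $\Omega=\R^d$. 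Each step you carry out is sound, including the key observation that a test function touching $w:=u-(v+m)$ from below yields the linearized viscosity inequality, and the boundary identification $\partial\Omega\subset\{v=0\}$. The trade-off is clear: the paper's route is shorter and avoids re-deriving any PDE estimates, while yours is self-contained and makes explicit the strong-comparison mechanism that Proposition~\ref{PropComparison} only gestures at (its proof likewise ends with ``this contradicts the comparison principle in $\{u>0\}$'' without spelling out the linearization). One could even argue that your version is more transparent about what convexity of $F$ is actually buying; but you did pay for it with a longer argument when a direct application of Proposition~\ref{PropComparison} on expanding balls would have sufficed.
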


\begin{proof}
Take $x_0\in\R^d$ and $\eps>0$, then for large $R$, we have $x_0\in B_R$ and 
$$
u\ge v-\eps \text{ on }\partial B_R.
$$
Proposition \ref{PropComparison} gives $u(x_0)\ge v(x_0)-\eps$. 

This being true for all $\eps>0$, we have $u(x_0)\ge v(x_0).$
\end{proof}

The following provides compactness for the problem. For the nonlinear problem, it was first established by Lee \cite{L}.
\begin{prop}
\label{PropC11}
Suppose that $u$ solves the  \eqref{EqObP} in $B_1\subset\R^d$ with
$
u(0)=0.
$
Then
$$
\sup_{B_r}u\le Cr^2 \text{ for all  } r<1/2,
$$
and
$$
\|u\|_{C^{1,1}(B_{1/2})}\le C
$$
for a constant $C$ depending only on the dimension $d$ and the ellipticity constant $\Lambda$.
\end{prop}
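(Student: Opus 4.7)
The plan is the standard two-step approach, due to Lee, for $C^{1,1}$ regularity of the nonlinear obstacle problem. Step 1 is the quadratic growth bound $\sup_{B_r}u\le Cr^2$ at the contact point $0$; step 2 is the interior $C^{1,1}$ estimate derived from it by rescaling away from the contact set.

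For step 1, I would argue by compactness and contradiction. Assuming that $\sup_{B_{r_k}}u\ge k r_k^2$ for some sequence $r_k\to 0$, set $S_k:=\sup_{B_{r_k}}u$ and rescale $v_k(x):=u(r_k x)/S_k$. Then $v_k\ge 0$, $v_k(0)=0$, $\sup_{B_1}v_k=1$, and $v_k$ solves an obstacle problem on $B_{1/r_k}$ with operator $G_k(M):=F(\lambda_k M)/\lambda_k$ and right-hand side $1/\lambda_k\to 0$, where $\lambda_k:=S_k/r_k^2\to\infty$. Each $G_k$ shares the ellipticity constants of $F$, is convex, and converges to the recession function $F_\infty$ of $F$, which is uniformly elliptic, convex and $1$-homogeneous. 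Using the Krylov--Safonov H\"older estimate to ensure equicontinuity, I would pass to a limit $v_\infty\ge 0$ on $\R^d$ with $v_\infty(0)=0$, $\sup_{B_1}v_\infty=1$, and $F_\infty(D^2 v_\infty)\le 0$ in the viscosity sense. Since $F_\infty(0)=0$ by homogeneity and $F_\infty$ is uniformly elliptic, the Pucci bound $\mathcal{M}^{-}(D^2 v_\infty)\le 0$ holds, so $v_\infty$ is a nonnegative $\mathcal{M}^{-}$-superharmonic function vanishing at an interior point. The strong minimum principle for Pucci operators then forces $v_\infty\equiv 0$, contradicting $\sup_{B_1}v_\infty=1$.

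For step 2, fix $x_0\in B_{1/2}$ and set $d_0:=\mathrm{dist}(x_0,\Lambda(u))$. On $B_{d_0/2}(x_0)\subset\PosS$ the equation $F(D^2 u)=1$ holds without obstacle, and the quadratic growth at the nearest contact point gives $\sup_{B_{d_0}(x_0)}u\le Cd_0^2$. The rescaling $\tilde u(y):=u(x_0+d_0 y)/d_0^2$ is then a uniformly bounded solution of a scaled fully nonlinear equation on $B_{1/2}$; the convexity and uniform ellipticity of $F$ trigger the Evans--Krylov interior $C^{2,\alpha}$ estimate, yielding a bound on $|D^{2}\tilde u(0)|$ and hence, after unscaling, $|D^{2}u(x_0)|\le C$. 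At contact points the $C^{1,1}$ bound is immediate from $0\le u(x)\le C|x-x_0|^{2}$.

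The hard part will be the quadratic growth. In the linear setting the mean-value inequality yields it almost for free, but in the nonlinear setting one has to invoke the full Krylov--Safonov machinery, pass to a scale-invariant limit problem via the recession function of $F$, and appeal to the strong minimum principle for Pucci operators; the verification that $G_k\to F_\infty$ locally uniformly, and that the positive-set equation passes to the limit, is where the technical care concentrates.
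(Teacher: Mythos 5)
The paper does not prove this proposition; it cites Lee's thesis \cite{L} and uses the statement as a black box, so your proof can only be assessed on its own merits. Your overall strategy --- quadratic growth at the contact point via a compactness contradiction, followed by interior $C^{1,1}$ bounds from Evans--Krylov after rescaling by the distance to the contact set --- is indeed the standard route (essentially Lee's), and Step~2 is correct modulo routine bookkeeping about which balls stay inside $B_1$.

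There is, however, a genuine gap in Step~1. Setting $S_k:=\sup_{B_{r_k}}u$ and $v_k(x):=u(r_kx)/S_k$, you have $\sup_{B_1}v_k=1$, but you have \emph{no} control over $\sup_{B_R}v_k$ for any $R>1$: the ratio $\sup_{B_{2r_k}}u/\sup_{B_{r_k}}u$ can be arbitrarily large along a sequence for which $S_k/r_k^2\to\infty$. Without a local uniform bound on a ball strictly larger than $B_1$, Krylov--Safonov gives equicontinuity only on compact subsets of $B_1$, and you cannot extract a limit ``on $\R^d$'' as claimed. Worse, even if you extract a limit $v_\infty$ inside $B_1$, the convergence is not uniform up to $\partial B_1$, so $\sup_{B_1}v_\infty=1$ does not follow from $\sup_{B_1}v_k=1$ --- the supremum may escape to the boundary. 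Hence the intended contradiction with $v_\infty\equiv0$ evaporates.

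The standard repair is a dyadic doubling dichotomy. Set $S_j:=\sup_{B_{2^{-j}}}u$ and prove that there is a constant $C_0$, depending only on $d$ and $\Lambda$, such that for every $j$ either $S_{j+1}\le C_0\,4^{-j-1}$ or $S_{j+1}\le\tfrac14 S_j$; iterating yields $S_j\le C\,4^{-j}$. If this dichotomy failed, one would get $j_k\to\infty$ with $S_{j_k+1}>k\,4^{-j_k-1}$ and $S_{j_k+1}>\tfrac14 S_{j_k}$; rescaling $v_k(x):=u(2^{-j_k-1}x)/S_{j_k+1}$ then gives $\sup_{B_1}v_k=1$, $\sup_{B_2}v_k=S_{j_k}/S_{j_k+1}<4$, $v_k(0)=0$, and a vanishing right-hand side. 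Now the family is uniformly bounded in $B_2$, Krylov--Safonov gives compactness on $\overline{B_1}$, the limit $v_\infty$ satisfies $\sup_{B_1}v_\infty=1$, $v_\infty\ge0$, $v_\infty(0)=0$, and $\mathcal{M}^-_{1/\Lambda,\Lambda}(D^2v_\infty)\le0$ in $B_2$, and the strong minimum principle kills it. As a side remark, the detour through the recession function $F_\infty$ is unnecessary: from $G_k(D^2v_k)\le 1/\lambda_k$ and uniform ellipticity you get directly $\mathcal{M}^-_{1/\Lambda,\Lambda}(D^2v_k)\le (1-F(0))/\lambda_k$ in the viscosity sense, which passes to the limit without ever introducing $F_\infty$.
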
 

The following gives the stability of the problem. See, for instance, Proposition 3.17 in Petrosyan-Shahgholian-Uraltseva \cite{PSU}.

\begin{prop}
\label{PropStab}
Suppose that $\{u_n\}$ is a sequence of solutions to \eqref{EqObP} in $B_1\subset\R^d$ and 
$$
u_n\to u_\infty \text{ locally uniformly in }B_1.
$$
Then the convergence is locally uniform in $W^{2,p}$ for any $p\in [1,+\infty)$, and the limit $u_\infty$ solves \eqref{EqObP} in $B_1$.
\end{prop}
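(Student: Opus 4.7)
The plan is to argue in two stages: first, to show that the uniform limit $u_\infty$ is a viscosity solution of the obstacle problem \eqref{EqObP}; second, to upgrade the locally uniform convergence to strong convergence in $W^{2,p}_{\mathrm{loc}}(B_1)$ by exploiting the $C^{1,1}$ bound of Proposition \ref{PropC11}.

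For the first stage, I would invoke the standard stability of viscosity solutions (see \cite{CC}). The bound $u_\infty \ge 0$ is immediate, and the inequality $F(D^2 u_\infty) \le 1$ passes to the limit since uniform limits of viscosity subsolutions of a continuous elliptic operator remain subsolutions. To recover $F(D^2 u_\infty) = 1$ on $\PosS$, I would observe that if $u_\infty(x_0) > 0$ then, by uniform convergence, $u_n > 0$ on a ball $B_\rho(x_0)$ for all large $n$; hence $F(D^2 u_n) = 1$ in $B_\rho(x_0)$, and the stability theorem then yields $F(D^2 u_\infty) = 1$ in $B_\rho(x_0)$ as well.

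For the second stage, a covering argument together with Proposition \ref{PropC11} gives a uniform $C^{1,1}$ bound for $\{u_n\}$ on every compact subset of $B_1$. In particular $\{D^2 u_n\}$ is uniformly bounded in $L^\infty_{\mathrm{loc}}(B_1)$; weak-$*$ compactness and uniqueness of the limit then yield weak convergence $u_n \rightharpoonup u_\infty$ in $W^{2,p}_{\mathrm{loc}}$ for every $p < \infty$. To promote this to strong convergence, fix a compact $K \subset B_1$ and split it as $K = (K \cap \{u_\infty > 0\}) \cup (K \cap \mathrm{int}(\Lambda(u_\infty))) \cup (K \cap \partial \Lambda(u_\infty))$. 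On any compact subset of $\{u_\infty > 0\}$, uniform convergence forces $u_n > 0$ on a neighborhood for large $n$, so $F(D^2 u_n) = 1$ there; by the Evans--Krylov theorem, which applies since $F$ is convex and uniformly elliptic, $\{u_n\}$ is uniformly $C^{2,\alpha}$, and Arzel\`a--Ascoli yields uniform convergence of $D^2 u_n$. On any compact subset of $\mathrm{int}(\Lambda(u_\infty))$, $u_n$ converges uniformly to zero; invoking the standard nondegeneracy of \eqref{EqObP}, namely that $u_n(y) > 0$ implies $\sup_{B_r(y)} u_n \ge c r^2$, one concludes $u_n \equiv 0$ on this subset for all large $n$, so $D^2 u_n$ vanishes there identically.

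The main obstacle is the free-boundary piece $K \cap \partial \Lambda(u_\infty)$. Here I would use that $u_\infty$ itself satisfies \eqref{EqObP} with local $C^{1,1}$ regularity, which by standard obstacle-problem arguments (see \cite{PSU}) implies $|\partial \Lambda(u_\infty)| = 0$. Combined with the uniform $L^\infty$ bound on the Hessians and the pointwise a.e.\ convergence established off the free boundary, the Lebesgue dominated convergence theorem then gives strong convergence $D^2 u_n \to D^2 u_\infty$ in $L^p(K)$ for every $p < \infty$. Together with the $C^{1,\alpha}$ convergence of $u_n$ and $D u_n$ furnished by the $C^{1,1}$ bound and Arzel\`a--Ascoli, this completes the proof.
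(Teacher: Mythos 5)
The paper offers no proof of this proposition: it only cites \cite{PSU} (Proposition 3.17 there), which is stated for obstacle-type problems driven by the Laplacian, so your argument is a genuinely different — and in this fully nonlinear setting, necessary — substitute. Its skeleton is sound: closedness of viscosity sub/supersolutions under locally uniform limits plus the localization on $\{u_\infty>0\}$ gives that $u_\infty$ solves \eqref{EqObP}; uniform interior $C^{1,1}$ bounds give weak-$*$ $W^{2,\infty}_{\mathrm{loc}}$ compactness; Evans--Krylov gives locally uniform $C^2$ convergence inside $\{u_\infty>0\}$; nondegeneracy forces $u_n\equiv 0$ eventually on compact subsets of the interior of $\Lambda(u_\infty)$; porosity gives $|\partial\Lambda(u_\infty)|=0$; and dominated convergence with the uniform Hessian bound upgrades a.e.\ convergence of $D^2u_n$ to strong $L^p_{\mathrm{loc}}$ convergence, while the $C^{1,1}$ bounds and Arzel\`a--Ascoli handle $u_n$ and $Du_n$.

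Two steps need tightening. First, the uniform local $C^{1,1}$ bound does not follow from Proposition \ref{PropC11} by a covering argument alone, since that proposition requires a contact point at the center of the ball, and $u_n$ may have no contact points near a given portion of your compact set $K$. The standard fix is a dichotomy: if $x_0\in K$ is within a fixed fraction of $\mathrm{dist}(K,\partial B_1)$ of $\Lambda(u_n)$, rescale Proposition \ref{PropC11} about a nearby contact point; otherwise $F(D^2u_n)=1$ in a ball of definite radius around $x_0$ and interior (Evans--Krylov) estimates apply, with constants depending on $\sup_n\|u_n\|_{L^\infty}$ of a slightly larger compact set, which is finite precisely because $u_n\to u_\infty$ locally uniformly; this last ingredient should be stated. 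Second, the two facts you import as standard — the nondegeneracy $u_n(y)>0\Rightarrow\sup_{B_r(y)}u_n\ge cr^2$ with $c$ independent of $n$, and $|\partial\Lambda(u_\infty)|=0$ via porosity — hold for the fully nonlinear problem (see Lee \cite{L} and Savin--Yu \cite{SY1} rather than \cite{PSU}), but both hinge on $F(0)<1$: the comparison with a paraboloid of small opening only rules out interior maxima when $1-2c\Lambda\sqrt d>F(0)$, and \eqref{EqObP} by itself only forces $F(0)\le 1$. This normalization (usually $F(0)=0$) is implicit in the paper, but since your proof is where it is genuinely used, it should be made explicit. With these repairs the argument is complete and correct.
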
 

With these two propositions, we classify blow-downs of  solutions to our problem. Recall  the class of solutions $\GC$ from \eqref{EqPC}. 

\begin{lem}
\label{LemBlowDown}
For  $u\in\GC$ and $R>0$, define the rescaled solution
$$
u_R(x):=u(Rx)/R^2.
$$
Then there is a subsequence $R_n\to+\infty$ such that 
$$
u_{R_n}\to p=\frac12 x\cdot Ax \text{ locally uniformly in }\R^d,
$$
where $A\ge0$ and $F(A)=1.$
\end{lem}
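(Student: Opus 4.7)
The plan has two main steps: compactness of the rescalings $u_R$, and identification of the limit as a quadratic polynomial.

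For compactness, I would translate so that $0\in\Lambda(u)$ (possible since $\Lambda(u)\neq\emptyset$ is compact and $F$ is translation-invariant). Applying Proposition \ref{PropC11} to the function $y\mapsto u(Ry)/R^2$ on $B_1$, which solves \eqref{EqObP} and vanishes at the origin, and unscaling, I obtain the global estimates $\sup_{B_M} u \leq C M^2$ and $\|D^2 u\|_{L^\infty(B_M)}\leq C$ for every $M>0$, with $C=C(d,\Lambda)$. Since $D^2 u_R(x)=D^2 u(Rx)$ and $u_R(0)=\nabla u_R(0)=0$, the family $\{u_R\}$ is uniformly bounded in $C^{1,1}$ on each compact set. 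Arzel\`a--Ascoli then produces a subsequence $R_n\to\infty$ such that $u_{R_n}\to u_\infty$ locally uniformly in $C^{1,\beta}$, and Proposition \ref{PropStab} gives that $u_\infty$ solves \eqref{EqObP} in $\R^d$, with $u_\infty(0)=0$ and at most quadratic growth.

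The identification of $u_\infty=\tfrac12 x\cdot Ax$ is the crux. First, by choosing the subsequence along $R_n=2^n$ and using the identity $u_{R_{n+1}}(x)=u_{R_n}(2x)/4$, passing to the limit yields the dyadic homogeneity $u_\infty(2x)=4u_\infty(x)$; after a diagonal extraction along scales $c\cdot 2^n$ for $c$ in a countable dense subset of $(0,\infty)$, this upgrades (by continuity) to full $2$-homogeneity $u_\infty(cx)=c^2 u_\infty(x)$. Second, in $\R^d\setminus\Lambda(u_\infty)$ the equation $F(D^2 u_\infty)=1$ holds classically, and Evans--Krylov (applicable as $F$ is convex, uniformly elliptic, $C^1$) gives $u_\infty\in C^{2,\alpha}_{\mathrm{loc}}$ there, so $D^2 u_\infty$ is $0$-homogeneous and H\"older off the contact set. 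Writing $u_\infty(x)=|x|^2 g(x/|x|)$ from the $2$-homogeneity, the equation $F(D^2 u_\infty)=1$ becomes a constraint on $g$ on the sphere, which, combined with the global $C^{1,1}$ regularity of $u_\infty$ at the origin, forces $g$ to be the restriction of a quadratic polynomial. Finally, $u_\infty\geq 0$ gives $A\geq 0$ and the equation gives $F(A)=1$.

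The hardest step will be identifying the $2$-homogeneous limit as a genuinely quadratic polynomial: one must combine the fully nonlinear equation on the sphere with the $C^{1,1}$ regularity at the origin to rule out non-polynomial $2$-homogeneous solutions. A useful alternative route is to bracket $u_\infty$ between explicit quadratic super- and subsolutions via Corollary \ref{CorComparisonForGlobal} (using polynomials $\tfrac12 x\cdot Ax$ with $F(A)=1$ as global solutions of \eqref{EqObP}) and match coefficients in all directions to conclude the polynomial form.
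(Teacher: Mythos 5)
The compactness half of your proposal matches the paper: Proposition~\ref{PropC11} gives uniform $C^{1,1}$ bounds on $\{u_R\}$, a subsequence converges locally uniformly, and Proposition~\ref{PropStab} shows the limit $u_\infty$ solves \eqref{EqObP} with quadratic growth. Where you diverge is the identification of $u_\infty$, and that is where there are real gaps.

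The paper's key step, which you omit, is to exploit the compactness of $\Lambda(u)$ at the level of the equation. Since $\{u_{R_n}=0\}=\frac{1}{R_n}\Lambda(u)$ is contained in a ball shrinking to the origin, the $W^{2,p}_{\mathrm{loc}}$ convergence from Proposition~\ref{PropStab} gives
$$
F(D^2u_\infty)=\lim_n F(D^2u_{R_n})=\lim_n\bigl(1-\chi_{\{u_{R_n}=0\}}\bigr)=1 \quad\text{a.e.\ in }\R^d,
$$
so $u_\infty$ solves the \emph{unconstrained} equation $F(D^2u_\infty)=1$ in all of $\R^d$, not merely off a free boundary. Evans--Krylov then gives the scale-invariant interior estimate $[D^2u_\infty]_{C^\alpha(B_r)}\le C r^{-\alpha}$ for every $r$, and letting $r\to\infty$ forces $D^2u_\infty$ to be constant; the growth bound for small $r$ kills the linear and constant parts. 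That is a short Liouville argument and needs no homogeneity.

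Your route has two concrete problems. First, the dyadic $2$-homogeneity argument does not go through: the subsequence $R_n\to\infty$ is produced by compactness, so you do not control it, and the identity $u_{2R}(x)=u_R(2x)/4$ only passes to the limit along the \emph{same} subsequence if you already know blow-downs are unique. There is no monotonicity formula available here (you flag this yourself in the introduction), so you cannot conclude that the limit is $2$-homogeneous. Second, even granting homogeneity, the step ``equation on the sphere $+$ $C^{1,1}$ at the origin implies polynomial'' is false as stated: in the Laplacian case $\tfrac12(x_d)_+^2$ is $2$-homogeneous, $C^{1,1}$, solves the obstacle problem, and is not a polynomial. To rule out such profiles you must use that $\Lambda(u)$ is compact and hence that the blow-down contact set degenerates to a point --- which is exactly the observation your proposal does not make. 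The bracketing alternative you mention at the end is also not developed enough to close the argument.
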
 

\begin{proof}
With the contact set $\LSet\neq\emptyset$,  Proposition \ref{PropC11} implies that the family $\{u_R\}$ is locally uniformly bounded in $C^{1,1}$. This allows us to extract a subsequence $\{u_{R_n}\}$ that locally uniformly converges to $u_\infty$, a solution to \eqref{EqObP} by Proposition \ref{PropStab}. With Proposition \ref{PropC11}, we have
\begin{equation}
\label{QuadraticGrowth}
\sup_{B_r}u_\infty\le Cr^2 \text{ for all }r>0.
\end{equation}

With $W^{2,p}_{loc}$-convergence in Proposition \ref{PropStab}, we have
$$
F(D^2u_\infty)=\lim_{n\to\infty} F(D^2u_{R_n})=\lim_{n\to\infty}\chi_{\{u_{R_n}>0\}}=1-\lim_{n\to\infty}\chi_{\{u_{R_n}=0\}}=1.
$$
For the last equality, we used 
$$
\{u_{R_n}=0\}=\frac{1}{R_n}\LSet,
$$
and that $\LSet$ is compact as $u\in\GC$.

Consequently, the limit $u_\infty$ solves $F(D^2u_\infty)=1$ in $\R^d$. With \eqref{QuadraticGrowth}, the Evans-Krylov estimate (Theorem 6.6 in \cite{CC}) gives, for all $r>0$,
$$
|D^2u_{\infty}(x)-D^2u_\infty(y)|\le C/r^\alpha\cdot |x-y|^\alpha \text{ for }x,y\in B_r.
$$
Here $\alpha\in(0,1)$ is a dimensional constant. 

Sending $r\to\infty$, we have that $D^2u_\infty$ is constant, and that $u_\infty$ is a quadratic polynomial. Estimate \eqref{QuadraticGrowth} for small $r$ implies that the linear and constant order terms vanish in this polynomial. 
\end{proof} 

For simplicity, we denote the family of blow-downs as $\Q$, that is,
\begin{equation}
\label{EqQ}
\Q:=\{p=\frac12 x\cdot Ax:A\ge0, \hem F(A)=1\}.
\end{equation} 

With the compactness assumption on the contact set $\LSet$, we often encounter solutions to elliptic equations in exterior domains. The following Liouville-type result by Li-Li-Yuan \cite{LLY} provides a powerful tool to address such solutions.
\begin{thm}[Theorem 2.1 in \cite{LLY}]
\label{ThmLLY}
Let $G$ be a uniformly elliptic operator that is $C^1$ and convex. For $d\ge3,$ suppose that $v$ satisfies
$$
G(D^2v)=0 \text{ in }\R^d\backslash B_1, \hem \text{ and }\sup_{\R^d\backslash B_1}|D^2v|<\infty.
$$
Then there exists a unique quadratic polynomial 
$$
Q(x)=\frac12 x\cdot Ax+b\cdot x+c
$$
such that 
$$
|v-Q|(x)=O(|x|^{2-d}) \text{ as }|x|\to\infty.
$$
\end{thm}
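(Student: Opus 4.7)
The strategy is a two-stage reduction. First, we produce the leading quadratic $Q_0(x) = \tfrac12 x\cdot Ax$ via a blow-down argument: bounded $D^2v$ together with Evans-Krylov (using convexity of $G$) yields enough compactness to extract a quadratic profile at infinity. Second, we linearize $G$ at $A$; the remainder $w := v - Q_0$ then solves a linear uniformly elliptic equation in the exterior, and the asymptotic expansion $w(x)=b\cdot x+c+O(|x|^{2-d})$ follows by comparison with the fundamental solution of the linearized operator at infinity. The hypothesis $d\ge 3$ enters only to ensure that this fundamental solution decays.

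\textbf{Step 1 (the leading quadratic).} For $R>1$ set $v_R(x):=R^{-2}v(Rx)$. Then $G(D^2v_R)=0$ in $\R^d\setminus B_{1/R}$ with $|D^2v_R|\le C$, and Evans-Krylov gives uniform $C^{2,\alpha}$ bounds on every annulus $\{r\le|x|\le 1/r\}$. After subtracting the affine part at a fixed reference point, a subsequence $v_{R_n}$ converges in $C^2_{loc}(\R^d\setminus\{0\})$ to some $v_\infty$ with $G(D^2v_\infty)=0$ and bounded Hessian. The growth $|v_\infty|\le C|x|^2$ near $0$ allows a removable-singularity extension across the origin (bounded viscosity solutions of uniformly elliptic equations on punctured balls extend). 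Evans-Krylov on arbitrarily large balls then yields $[D^2v_\infty]_{C^\alpha(B_{r/2})}\le Cr^{-\alpha}\to 0$, so $v_\infty$ is a quadratic polynomial; the $R^{-2}$ rescaling kills its subquadratic part, leaving $v_\infty(x)=\tfrac12 x\cdot Ax$ with $G(A)=0$.

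\textbf{Step 2 (linearizing).} Put $Q_0(x):=\tfrac12 x\cdot Ax$ and $w:=v-Q_0$. Since $G(D^2v)-G(A)=0$ and $G\in C^1$,
\[
a^{ij}(x)\,\partial_{ij}w(x)=0 \text{ in }\R^d\setminus B_1,\qquad a^{ij}(x):=\int_0^1 \partial_{M_{ij}}G\bigl(A+t\,D^2w(x)\bigr)\,dt,
\]
a linear uniformly elliptic equation with Hölder coefficients (by Evans-Krylov on $v$). The subsequential convergence $v_{R_n}\to Q_0$ transfers to the full family: invoking Evans-Krylov on consecutive dyadic scales together with the scale invariance of $Q_0$, one shows every subsequential blow-down limit equals $Q_0$, so $v_R\to Q_0$ in $C^2_{loc}(\R^d\setminus\{0\})$. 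Consequently $D^2w(x)\to 0$, $a^{ij}(x)\to a^{ij}_\infty:=\partial_{M_{ij}}G(A)$, and $w(x)=o(|x|^2)$ as $|x|\to\infty$.

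\textbf{Step 3 (linear exterior Liouville and uniqueness).} The problem has been reduced to the linear statement: if $Lw=a^{ij}\partial_{ij}w=0$ in $\R^d\setminus B_1$ with Hölder coefficients $a^{ij}\to a^{ij}_\infty$ and $w=o(|x|^2)$, then $w(x)=b\cdot x+c+O(|x|^{2-d})$. A second blow-down $w(Rx)/R^{2-\eps}$ whose limits solve the constant-coefficient $L_\infty:=a^{ij}_\infty\partial_{ij}$ subquadratically shows that these limits are affine, improving the growth to $w=O(|x|)$; subtracting the corresponding linear and constant terms gives $\tilde w$ bounded, with $L_\infty\tilde w=(a^{ij}_\infty-a^{ij})\partial_{ij}\tilde w$ a source decaying at infinity. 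Sandwiching $\tilde w$ between multiples of the constant-coefficient Green's function $\Gamma_{L_\infty}(x)\sim|x|^{2-d}$ via the maximum principle on annuli $\{1\le|x|\le R\}$ and sending $R\to\infty$ then yields the decay. Uniqueness of $Q$ is immediate since two admissible quadratics differ by $O(|x|^{2-d})=o(1)$. The main obstacle is the final barrier step: the coefficient convergence $a^{ij}\to a^{ij}_\infty$ comes without quantitative rate, so the barrier construction must be iterated dyadically to absorb the non-constant correction at each scale.
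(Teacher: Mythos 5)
This statement is not proved in the paper at all: it is quoted verbatim as Theorem 2.1 of Li--Li--Yuan \cite{LLY} and used as a black box, so there is no internal proof to compare with, and what you are attempting is a reproof of a substantial external theorem. Your outline does follow the broad architecture of the actual proof in \cite{LLY} (blow-down to identify the Hessian at infinity, linearization at $A$, comparison with the decaying fundamental solution in $d\ge3$ to produce $b\cdot x+c$ and the $O(|x|^{2-d})$ error), but the two steps that carry the real content of the theorem are asserted rather than proved, so as written the argument has genuine gaps.

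First, the uniqueness of the blow-down (your Step 2) does not follow from ``Evans--Krylov on consecutive dyadic scales together with scale invariance.'' Interior Evans--Krylov on a ball of radius comparable to $|x_0|$ gives $[D^2v]_{C^\alpha}\le C|x_0|^{-\alpha}$ there, hence an oscillation of $D^2v$ of order \emph{one} at each scale, not a decaying one; and since the level set $\{G=0\}\subset\mathcal{S}_d$ contains continua of matrices, connectedness of the family of subsequential limits does not single out $Q_0$. This is precisely where \cite{LLY} uses convexity beyond Evans--Krylov: each pure second derivative $\partial_{ee}v$ is a bounded viscosity subsolution of the linearized equation with measurable coefficients, and Krylov--Safonov/weak Harnack arguments in the exterior domain are used to show that $D^2v(x)$ converges to a single matrix $A$ as $|x|\to\infty$, with an algebraic rate. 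Second, your Step 3 is not a complete argument: the rescaling $w(Rx)/R^{2-\eps}$ need not be bounded, since you only know $w=o(|x|^2)$, so that compactness step is ill-posed; and without any quantitative rate for $a^{ij}(x)\to a^{ij}_\infty$ the final barrier comparison with $\Gamma_{L_\infty}\sim |x|^{2-d}$ does not close, because the correction term $(a^{ij}_\infty-a^{ij})\partial_{ij}\tilde w$ cannot be absorbed over dyadic annuli unless one has first established a decay rate such as $|D^2v(x)-A|\le C|x|^{-\sigma}$ and then iterated to improve it. You acknowledge this as ``the main obstacle'' but do not resolve it; resolving exactly these two points is essentially the content of the proof in \cite{LLY}, so the proposal should either supply those arguments or, as the paper does, cite the result.
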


\section{The map and its bijectivity}
In this section, we construct the map $\Phi$ in Theorem \ref{ThmMainThm} and show that it is a bijection between the spaces $\GC$ and $\PC$ defined in \eqref{EqGC} and \eqref{EqPC} respectively.

\subsection{Construction of the map}
The following proposition defines the map $\Phi$.
\begin{prop}
\label{ThisProposition}
For $d\ge 3$, 
given $u\in\GC$, there is a unique $Q\in\PC$ such that 
$$
|u-Q|(x)=O(|x|^{2-d}) \text{ as }|x|\to\infty.
$$
\end{prop}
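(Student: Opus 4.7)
My strategy is to apply the Liouville-type theorem of Li--Li--Yuan (Theorem~\ref{ThmLLY}) to extract an asymptotic quadratic polynomial for $u$, and then use the global comparison principle (Corollary~\ref{CorComparisonForGlobal}) to verify that this polynomial lies in $\PC$.

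For the setup, I would fix any matrix $A_0\in\Sd$ with $F(A_0)=1$ (which exists by Lemma~\ref{LemBlowDown}) and define the shifted operator $G(M):=F(M+A_0)-1$ together with the function $v(x):=u(x)-\tfrac12 x\cdot A_0 x$. The operator $G$ inherits uniform ellipticity, convexity, and $C^1$ regularity from $F$, and $G(D^2 v)=F(D^2 u)-1=0$ on $\R^d\setminus\LSet$. To invoke Theorem~\ref{ThmLLY} I need $\sup|D^2 v|<\infty$, which reduces to a global $L^\infty$ bound on $D^2 u$; this follows by applying the $C^{1,1}$ estimate of Proposition~\ref{PropC11} to the rescalings $u(x_0+Rx)/R^2$ centered at a fixed $x_0\in\LSet$ and sending $R\to\infty$. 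Theorem~\ref{ThmLLY} then produces a unique quadratic polynomial $Q_0$ with $|v-Q_0|(x)=O(|x|^{2-d})$, and setting $\tilde Q(x):=\tfrac12 x\cdot A_0 x+Q_0(x)=\tfrac12 x\cdot Ax+b\cdot x+c$ gives the desired asymptotic expansion $|u-\tilde Q|(x)=O(|x|^{2-d})$. Uniqueness of $\tilde Q$ is then immediate: any other candidate would differ from $\tilde Q$ by a quadratic polynomial vanishing at infinity, hence identically zero. The identity $F(A)=1$ also comes essentially for free: the expansion identifies $\tfrac12 x\cdot Ax$ as the (unique) blow-down of $u$, and Lemma~\ref{LemBlowDown} gives $F(A)=1$.

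The hard part will be verifying the remaining defining properties of $\PC$: strict positivity of $A$ and the sign condition $a\le 0$. Both rely on the following principle. Whenever $\tilde Q\ge 0$ on $\R^d$, then since $F(D^2\tilde Q)=F(A)=1$ the polynomial $\tilde Q$ itself solves \eqref{EqObP} globally, and since $u-\tilde Q\to 0$ at infinity, Corollary~\ref{CorComparisonForGlobal} applied in both directions forces $u\equiv\tilde Q$. To prove $A>0$, I would suppose for contradiction that $Ae=0$ for some unit vector $e$. The identity $\tilde Q(x+te)-\tilde Q(x)=t(b\cdot e)$, combined with $u\ge 0$ and $u-\tilde Q\to 0$, first forces $b\cdot e=0$; then $\tilde Q$ is constant along $e$, and the lower bound $\tilde Q(x)\ge -C|x|^{2-d}$ (from $u\ge 0$) yields $\tilde Q\ge 0$ everywhere. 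The principle above then gives $u\equiv\tilde Q$, so $\LSet=\{\tilde Q=0\}$ is either empty or translation-invariant in the direction $e$---either way contradicting that $\LSet$ is non-empty and compact. With $A>0$ established, I would write $\tilde Q(x)=\tfrac12(x-\bar x)\cdot A(x-\bar x)+a$ and suppose $a>0$: then $\tilde Q>0$ everywhere, the principle gives $u\equiv\tilde Q$, and the resulting empty contact set again contradicts $u\in\GC$. Thus $a\le 0$ and $\tilde Q\in\PC$.
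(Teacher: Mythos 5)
Your proposal is correct and follows the same high-level strategy as the paper — subtract a quadratic polynomial to land in the Li--Li--Yuan exterior Liouville framework, identify the limiting polynomial, and then use the global comparison principle (Corollary~\ref{CorComparisonForGlobal}) to verify the defining properties of $\PC$. The differences are organizational but genuinely interesting.

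The paper subtracts a \emph{blow-down} polynomial $p=\tfrac12 x\cdot Ax$ from Lemma~\ref{LemBlowDown} (so $F(A)=1$ holds by construction), then has to show the extra quadratic part $M$ produced by Theorem~\ref{ThmLLY} vanishes by comparing with the blow-down along the defining subsequence. You instead subtract $\tfrac12 x\cdot A_0x$ for an arbitrary $A_0$ with $F(A_0)=1$, read off the quadratic coefficient $A$ directly from the asymptotic expansion, and recover $F(A)=1$ a posteriori because the expansion forces the full blow-down limit to be $\tfrac12 x\cdot Ax$. Both routes are sound; yours avoids the ``$M=0$'' verification but needs the small extra observation that $\tilde Q$ is independent of the choice of $A_0$ (which you could make explicit, though it follows from the uniqueness you already prove). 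The more substantive divergence is in proving positivity of $A$. The paper first absorbs the linear part coordinate-by-coordinate (Step 2), then treats the sign of the constant (Step 3), and only then proves non-degeneracy of $A$ by splitting into the cases $a'<0$ and $a'=0$ (Step 4). You prove $A>0$ in one shot: assuming $Ae=0$ for some unit $e$, the growth constraint $u\ge 0$ plus $u-\tilde Q\to 0$ first kills $b\cdot e$, then forces $\tilde Q\ge 0$ by sending the $e$-coordinate to infinity, and then the two-sided comparison principle gives $u\equiv\tilde Q$, whose zero set would be either empty or a cylinder in direction $e$ — both incompatible with $\LSet$ being nonempty and compact. This unifies the paper's Steps 2 and 4 and is arguably cleaner; the sign $a\le 0$ then follows as in the paper once $A>0$ lets you complete the square. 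I see no gaps.
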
 
\begin{rem}
\label{RemDefOfPhi}
Once Proposition \ref{ThisProposition} is established, the map $\Phi$ will be defined as 
$$
\Phi(u):=Q.
$$
\end{rem}

\begin{proof}
Let $p=\frac12 x\cdot Ax\in\Q$ be a blow-down of $u$ given by Lemma \ref{LemBlowDown}, where the class $\Q$ is defined in \eqref{EqQ}. Since the contact set $\LSet$ is compact, up to a rescaling, we assume 
$$
\LSet\subset B_1.
$$

If we let 
$
v:=u-p,
$
and define $G:\Sd\to\R$ by
$
G(M):=F(M+A)-F(A),
$
then $v$ satisifes
$$
G(D^2v)=0 \text{ outside }B_1.
$$
Moreover, with Proposition \ref{PropC11}, we have 
$
\sup_{\R^d}|D^2v|< \infty.
$
Consequently, Theorem \ref{ThmLLY} gives
$$
v(x)=\frac12 x\cdot Mx+b\cdot x+a+E(x),
$$
with 
$$
|E(x)|=O(|x|^{2-d}).
$$

Define the quadratic polynomial 
$$
Q:=p+\frac12 x\cdot Mx+b\cdot x+a=\frac12 x\cdot (A+M)x+b\cdot x+a.
$$
Then we have
\begin{equation}
\label{ProofOfProp3.1Expansion}
u(x)=Q(x)+E(x).
\end{equation}
It is not difficult to see such polynomial $Q$ is unique. 

In the remaining part, we show that $Q$ belongs to $\PC$ from \eqref{EqPC}.

\vem 

\textit{Step 1: Identification of the quadratic part.}

For each $R>0$, define the rescaled solution 
$
u_R(x)=\frac{1}{R^2}u(Rx).
$
For these functions, the expansion in \eqref{ProofOfProp3.1Expansion} reads
$$
u_R(x)=\frac12 x\cdot(A+M)x+\frac{1}{R}b\cdot x+\frac{a}{R^2}+\frac{1}{R^2}E(Rx).
$$
By the definition of the blow-down $p=\frac12 x\cdot Ax$, we have $u_{R_n}\to p$ along a sequence of $R_n\to+\infty$. This implies
$
M=0.
$
As a result,
$$
u(x)=\frac12 x\cdot Ax+b\cdot x+a+E(x).
$$

\vem

\textit{Step 2: Absorption of the linear part.}

Up to a rotation, we have 
$$
u(x)=\frac12\sum a_jx_j^2+\sum b_jx_j+a+E(x),
$$
where $a_j\ge0.$

Suppose $a_1=0$, then by the  boundedness of $a+E(x)$ and the fact $u\ge0$, we have 
$b_1=0$. Similarly, for $j=1,2,\dots,d$, we must have $b_j=0$ whenever $a_j=0.$

Consequently, we have
$$
u(x)=\frac12\sum a_j(x_j-\overline{x}_j)^2+a'+E(x)
$$
for a point $\overline{x}=(\overline{x}_1,\overline{x}_2,\dots,\overline{x}_d)\in\R^d$ and $a'\in\R$.

\vem

\textit{Step 3: Sign of the constant. }

Up to now, we have
$$
u(x)=\frac12 (x-\overline{x})\cdot A(x-\overline{x})+a'+E(x).
$$

Suppose that $a'>0$. With $A\ge0$ and $F(A)=1$, we see that $q:=\frac12 (x-\overline{x})\cdot A(x-\overline{x})+\frac12 a'$ is a solution to the nonlinear obstacle problem \eqref{EqObP}. 

With $|E(x)|=O(|x|^{2-d})$, we have
$$
\liminf_{|x|\to\infty}(u-q)(x)\ge 0.
$$
Corollary \ref{CorComparisonForGlobal} implies 
$$
u\ge q>0 \text{ in }\R^d.
$$
In particular, we have $\LSet=\emptyset$, contradicting our assumption that $u\in\GC$. 

Therefore, we must have
$$
a'\le 0.
$$

\vem

\textit{Step 4: Non-degeneracy of the quadratic part.}

If $a'<0$, then with $|E(x)|=O(|x|^{2-d})$ and $u\ge0$, we have $\frac{1}{2}(x-\overline{x})\cdot A(x-\overline{x})>0$ for all large $x$. In this case, eigenvalues of $A$ are strictly positive. 

If $a'=0$, then we can apply Corollary \ref{CorComparisonForGlobal} to $u$ and $\frac12(x-\overline{x})\cdot A(x-\overline{x})$ to conclude
$$
u=\frac12(x-\overline{x})\cdot A(x-\overline{x}) \text{ in }\R^d.
$$
With the compactness of $\LSet$, again we conclude that all eigenvalues of $A$ are strictly positive. 

\vem 

In summary, we have 
$$
Q(x)=\frac12 (x-\overline{x})\cdot A(x-\overline{x})+a',
$$
where $F(A)=1$ and $a'\le 0$. Moreover, all eigenvalues of $A$ are strictly positive.  Therefore, we have $Q\in\PC.$
\end{proof} 

\subsection{Injectivity of the map} 
As a consequence of Corollary \ref{CorComparisonForGlobal}, we have the following.
\begin{prop}
For $d\ge3,$
suppose that $u$ and $v$ are global solutions to the nonlinear obstacle problem \eqref{EqObP}, and let $\Phi$ be the map defined in Remark \ref{RemDefOfPhi}.

If $\Phi(u)=\Phi(v)$, then $u=v$.
\end{prop}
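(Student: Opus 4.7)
The plan is to reduce the injectivity statement directly to the asymptotic decay property built into the definition of $\Phi$, together with the global comparison principle in Corollary \ref{CorComparisonForGlobal}. Since $d \ge 3$, the exponent $2-d$ is strictly negative, so $|x|^{2-d} \to 0$ as $|x| \to \infty$; this is the crucial sign that makes the argument work (the hypothesis $d \ge 3$ is essential here, matching the one in Theorem \ref{ThmLLY}).

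Concretely, I would set $Q := \Phi(u) = \Phi(v)$. By the defining property of $\Phi$ coming from Proposition \ref{ThisProposition}, both differences $u - Q$ and $v - Q$ are $O(|x|^{2-d})$ at infinity. Subtracting gives
$$
(u-v)(x) = (u-Q)(x) - (v-Q)(x) = O(|x|^{2-d}) \to 0 \quad \text{as } |x| \to \infty.
$$
In particular, $\liminf_{|x|\to\infty}(u-v)(x) \ge 0$ and, by the same bound with the roles swapped, $\liminf_{|x|\to\infty}(v-u)(x) \ge 0$.

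Now I would invoke Corollary \ref{CorComparisonForGlobal} twice: once to conclude $u \ge v$ on all of $\R^d$, and once (after exchanging $u$ and $v$) to conclude $v \ge u$. Combining these two inequalities yields $u = v$, completing the proof.

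There is no real obstacle in this argument: the work has already been done in establishing the quantitative asymptotic decay in Proposition \ref{ThisProposition} and the global comparison principle in Corollary \ref{CorComparisonForGlobal}. The only point worth being explicit about is the use of $d \ge 3$, which ensures $|x|^{2-d}$ decays to $0$ rather than merely remaining bounded; this is what promotes the asymptotic matching into a genuine $\liminf \ge 0$ hypothesis for the comparison corollary.
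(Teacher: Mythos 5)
Your argument is correct and is essentially identical to the paper's: both deduce $\lim_{|x|\to\infty}(u-v)(x)=0$ from the shared asymptotic polynomial and then apply Corollary \ref{CorComparisonForGlobal} in both directions to conclude $u=v$. The additional remark about the sign of $2-d$ is accurate but already implicit in the paper's setup.
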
 
\begin{proof}
With $\Phi(u)=\Phi(v)$, we have
$$
\lim_{|x|\to\infty}(u-v)(x)=0.
$$
The conclusion follows from Corollary \ref{CorComparisonForGlobal}.
\end{proof}

\subsection{Surjectivity of the map}
Given a quadratic polynomial $$Q=\frac12(x-\overline{x})\cdot A(x-\overline{x})+a\in\PC,$$ in this subsection we construct  $u\in\GC$ such that 
$$
\Phi(u)=Q.
$$
Recall the space of solutions $\GC$ and the space of polynomials $\PC$ from \eqref{EqGC} and \eqref{EqPC} respectively.  The map $\Phi$ is defined in Remark \ref{RemDefOfPhi}.

By translation, we might take $\overline{x}=0.$

If $a=0$, then $Q\in\GC$ and $$\Phi(Q)=Q.$$ Thus it remains to consider the case 
$a<0$. Up to a rescaling, we assume  $a=-1.$

If we let $DF(\cdot)$ denote the derivative of $F$, then up to a normalization, it suffices to consider the case when 
$$
DF(A)=I.
$$

Consequently, for the surjectivity of the map $\Phi$, it suffices to establish the following:
\begin{prop}
For $d\ge3, $
given $Q=\frac12 x\cdot Ax-1\in\PC$ with $DF(A)=I$, there is $u\in\GC$ such that 
$$
|u-Q|(x)=O(|x|^{2-d}) \text{ as }|x|\to\infty.
$$
\end{prop}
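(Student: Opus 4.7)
The plan is to construct $u$ as a subsequential limit of obstacle-problem solutions on expanding balls and then pin down its asymptotic polynomial via a barrier argument. For each $R$ large enough that $Q > 0$ on $\partial B_R$, let $u_R$ be the viscosity solution of \eqref{EqObP} in $B_R$ with boundary data $u_R = Q$ on $\partial B_R$. Two applications of Proposition \ref{PropComparison} yield the two-sided bound $Q \le u_R \le Q + 1$: for the lower bound, compare $u_R$ with $Q$ itself (which satisfies $F(D^2 Q) = 1$); for the upper bound, with the global solution $\frac{1}{2}x\cdot Ax = Q + 1 \in \GC$. Since $(Q+1)(0) = 0$, this forces $u_R(0) = 0$, so Proposition \ref{PropC11} gives locally uniform $C^{1,1}$ bounds on $\{u_R\}$. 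Extracting a subsequential locally uniform limit $u$, which is a global solution by Proposition \ref{PropStab}, the bound $u \le Q + 1$ yields $\{u = 0\} \subset \{Q \le 0\}$ (compact), and $0 \in \{u=0\}$, so $u \in \GC$. Proposition \ref{ThisProposition} then supplies $\tilde Q \in \PC$ with $|u - \tilde Q|(x) = O(|x|^{2-d})$; the blow-down analysis of Step 1 of its proof gives the quadratic part of $\tilde Q$ as $\frac{1}{2}x\cdot Ax$, and the inequality $u \ge Q$ rules out any linear term in $\tilde Q - Q$, so $\tilde Q = Q + c$ for some $c \in [0, 1]$. The task reduces to showing $c = 0$.

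For this I will construct an explicit decaying supersolution of the linearized operator. Write $G(M) := F(M + A) - 1$, so that $v := u - Q$ satisfies $G(D^2 v) \le 0$ everywhere and $G(D^2 v) = 0$ outside $\Lambda(u)$, which is contained in some ball $B_{r_0}$. The operator $G$ inherits uniform ellipticity, convexity, and $C^1$ regularity from $F$, and the normalization $DF(A) = I$ gives $G(0) = 0$ and $DG(0) = I$; in particular $G(M) = \mathrm{tr}(M) + o(|M|)$ as $|M| \to 0$. Consider the radial barrier $\phi(x) := (r_*/|x|)^\alpha$ for a fixed $\alpha \in (0, d - 2)$. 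Its Laplacian $\mathrm{tr}(D^2\phi) = -\alpha(d - 2 - \alpha)\, r_*^\alpha\,|x|^{-\alpha - 2}$ is negative and of the same order of magnitude as $|D^2\phi|$. Hence, choosing $r_*$ large enough so that $|D^2\phi| \lesssim r_*^{-2}$ is uniformly small on $\{|x| \ge r_*\}$ and the $o(|D^2\phi|)$ remainder is dominated by the trace, $\phi$ becomes a classical supersolution of $G(D^2\cdot) = 0$ on $\{|x| \ge r_*\}$. Take $r_* \ge r_0$; then $v_R := u_R - Q$ solves $G(D^2 v_R) = 0$ in $B_R \setminus \overline{B_{r_*}}$, with $v_R \le 1 = \phi$ on $\partial B_{r_*}$ and $v_R = 0 \le \phi$ on $\partial B_R$. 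Viscosity comparison in the annulus gives $v_R \le \phi$; passing to the limit yields $0 \le v(x) \le (r_*/|x|)^\alpha \to 0$ as $|x| \to \infty$.

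Having $v \to 0$ at infinity, I invoke Theorem \ref{ThmLLY} applied to $v$ on $\R^d \setminus B_{r_0}$ (its Hessian being bounded by Proposition \ref{PropC11}): there is a quadratic polynomial $P$ with $v = P + O(|x|^{2-d})$. Since $v \to 0$, $P \equiv 0$, and hence $v = O(|x|^{2-d})$, which is precisely the desired conclusion $|u - Q|(x) = O(|x|^{2-d})$. I expect the main obstacle to be verifying the supersolution property of $\phi$: although any $\alpha \in (0, d-2)$ works, the argument essentially exploits both the $C^1$ regularity of $F$ and the normalization $DF(A) = I$, which together ensure that the linearization of $G$ at the origin is the Laplacian and therefore dominates the behavior of $G$ for small Hessians.
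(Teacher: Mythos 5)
Your proposal is correct and follows essentially the same route as the paper: construct $u_R$ on expanding balls with boundary data $Q$, trap $Q \le u_R \le Q+1$ by comparison, pass to a global limit $u\in\GC$, expand $u-Q$ via Theorem \ref{ThmLLY}, and then kill the residual constant with an explicit decaying radial supersolution of $G$ on an exterior domain (the paper uses the specific exponent $1/2$, you allow any $\alpha\in(0,d-2)$, an immaterial generalization). One small slip to fix: the inclusion $\Lambda(u)\subset\{Q\le 0\}$ follows from the lower bound $u\ge Q$, not from $u\le Q+1$ as written.
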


\begin{proof}
\textit{Step 1: A global solution.}

For each $R>0$, let $u_R$ be the solution to the following problem 
$$
\begin{cases}
F(D^2u_R)\le 1 &\text{ in $B_R$,}\\
u_R\ge 0 &\text{ in }B_R,\\
F(D^2u_R)=1 &\text{ in }\{u_R>0\},\\
u_R=Q &\text{ on }\partial B_R.
\end{cases}
$$ 
Applying Proposition \ref{PropComparison} to $u_R$, $\frac12 x\cdot Ax$ and $Q$, we have
\begin{equation}
\label{EqTrappedInBetween}
Q\le u_R\le\frac12x\cdot Ax \text{ in }B_R.
\end{equation}
This leads to the following inclusion
\begin{equation*}
\label{EqInclusionOfContact}
0\in\Lambda(u_R)\subset\{Q\le 0\},
\end{equation*}
where $\Lambda(u_R)$ denote the contact set of $u_R$ as in \eqref{EqConS}. Since eigenvalues of $A$ are positive, the set $\{Q\le 0\}$ is compact. Up to a rescaling, we can assume
$$
\{Q\le0\}\subset B_1.
$$
This  implies, for $R>1$, 
\begin{equation}
\label{EqEqOutSide}
F(D^2u_R)=1 \text{ in }B_R\backslash B_1.
\end{equation} 

For $R'>R>0$, we have $u_{R'}\ge Q=u_{R}$ on $\partial B_R$. Proposition \ref{PropComparison} implies
$
u_{R'}\ge u_R \text{ in } B_R.
$
Together with Proposition \ref{PropC11} and Proposition \ref{PropStab}, the sequence $\{u_R\}$ satisfies
$$
u_R\to u_\infty \text{ as }R\to\infty
$$
locally uniformly in $\R^d$, where $u_\infty$ is a global solution to \eqref{EqObP}. Moreover, with \eqref{EqEqOutSide}, we have
\begin{equation}
\label{EqEqOutSide2}
F(D^2u_\infty)=1 \text{ outside } B_1.
\end{equation}

\vem

\textit{Step 2: The expansion at infinity.}

Define $v:=u_\infty-Q$ and 
\begin{equation}
\label{EqDefOfGOperator}
G(M):=F(M+A)-F(A).
\end{equation} 
By \eqref{EqEqOutSide2}, we have
$$
G(D^2v)=0 \text{ outside }B_1.
$$
Meanwhile, Proposition \ref{PropC11} gives
$
\sup_{\R^d}|D^2v|<\infty.
$
We  apply Theorem \ref{ThmLLY} and conclude
$$
v(x)=\frac12x\cdot Mx+b\cdot x+a+E(x)
$$
with $|E(x)|=O(|x|^{2-d}).$

The ordering \eqref{EqTrappedInBetween} is preserved by the convergence $u_R\to u_\infty$. As a result, we have $0\le v\le 1$ in $\R^d$. This implies
$
M=0, b=0, \text{ and }0\le a\le 1.
$
That is, 
\begin{equation}
\label{EqExpansionBeforeNontrivial}
u_\infty=Q+a+E(x),
\end{equation}
where $0\le a\le 1$ and $|E(x)|=O(|x|^{2-d}).$

The desired conclusion follows once we  show $a=0$.
\vem

\textit{Step 3: An upper barrier.}

For $r>1,$ define the auxiliary function in $\R^d\backslash\{0\}$
$$
\Sir(x):=r^{1/2}|x|^{-1/2},
$$
satisfying
\begin{equation}
\label{EqUnitValueForPsi}
\Sir=1\text{ on }\partial B_r,
\end{equation}
and 
$$
D^2\Sir(x)=\frac12r^{1/2}|x|^{-5/2}(\frac52\frac{x\otimes x}{|x|^2}-I)\hem \text{ in }\R^d\backslash\{0\}.
$$

For  the operator $G$ defined in \eqref{EqDefOfGOperator},  we compute  in $\PunS$
\begin{align*}
G(D^2\Sir)&=F(D^2\Sir+A)-F(A)\\
&=trace(DF(A)D^2\Sir)+\int_0^1[DF(A+sD^2\Sir)-DF(A)]:D^2\Sir ds.
\end{align*}
If we denote by $\omega(\cdot)$ the modulus of continuity of $DF(\cdot)$ at the matrix $A$, and use our assumption  $DF(A)=I$, then we continue as
\begin{align*}
G(D^2\Sir)&\le\Delta\Sir+\omega(|D^2\Sir|)|D^2\Sir|\\
&\le \frac12r^{1/2}|x|^{-5/2}[5/2-d+\frac52\omega(\frac54r^{1/2}|x|^{-5/2})].
\end{align*}

Fix $\overline{r}\gg 1$ such that $\frac52\omega(\frac54\overline{r}^{-2})<1/2.$ With $d\ge3$, we have
\begin{equation}
\label{EqSupersolutionPsi}
G(D^2\SiR)\le 0 \text{ outside }B_{\overline{r}}.
\end{equation} 

\vem 

\textit{Step 4: Vanishing of the constant $a$ in \eqref{EqExpansionBeforeNontrivial}.}

For $R>\overline{r}$,  take $v_R:=u_R-Q$, where $u_R$ is the solution constructed in \textit{Step 1} and $\overline{r}$ is chosen at the end of \textit{Step 3}.

With \eqref{EqTrappedInBetween} and \eqref{EqEqOutSide}, we have
$$
\begin{cases}
G(D^2v_R)=0 &\text{ in }B_R\backslash B_{\overline{r}},\\
v_R\le 1 &\text{ on }\partial B_{\overline{r}},\\
v_R=0 &\text{ on }\partial B_R.
\end{cases}
$$
Using \eqref{EqUnitValueForPsi} and \eqref{EqSupersolutionPsi}, we can apply the comparison principle and conclude
$$
u_R-Q=v_R\le \SiR \hem \text{ in }B_R\backslash B_{\overline{r}}.
$$
Sending $R\to\infty$ and using \eqref{EqExpansionBeforeNontrivial}, we have
$$
a+E=u_\infty-Q\le \overline{r}^{1/2}|x|^{-1/2} \text{ outside }B_{\overline{r}}.
$$
With $a\ge 0$ and $|E(x)|=O(|x|^{2-d})$, we conclude
$
a=0.
$

This gives the desired result, considering \eqref{EqExpansionBeforeNontrivial}.
\end{proof}



\begin{thebibliography}{100}
\bibitem[C]{C} Caffarelli, L.A. The obstacle problem revisited. {\em J. Fourier Anal. Appl.} 4 (1998), no. 4-5, 383-402.

\bibitem[CC]{CC} Caffarelli, L.A.; Cabr\'e, X. Fully nonlinear elliptic equations, {\em  AMS Colloquium Publications}, 43, AMS Providence RI, 1995. 

\bibitem[Di]{Di} Dive, P. Attraction des ellipso\"ides homog\`enes et r\'eciproques d'un th\'eor\`eme de Newton. {\em Bull. Soc. Math. France} 31 (1931), 128-140.

\bibitem[DF]{DF} DiBenedetto, E.; Friedman, A. Bubble growth in porous media. {\em Indiana Univ. Math. J.} 35 (1986), no. 3, 573-606.

\bibitem[EFW]{EFW} Eberle, S.; Figalli, A.; Weiss, G. Complete classification of global solutions to the obstacle problem. Preprint.

\bibitem[ERW]{ERW} Eberle, S.; Ros-Oton, X.; Weiss, G. Characterizing compact coincidence sets in the thin obstacle problem and the obstacle problem for the fractional Laplacian. {\em Nonlinear Anal.} 211 (2021), paper no. 112473, 7pp.

\bibitem[ESW]{ESW} Eberle, S.; Shahgholian, H.; Weiss, G. On global solutions of the obstacle problem. {\em Duke Math. J.} to appear. 

\bibitem[EW]{EW} Eberle, S.; Weiss, G. Characterizing compact coincidence sets in the obstacle problem - A short proof. {\em Algebra i Analiz} 32 (2020), no. 4, 137-145.

\bibitem[EY]{EY} Eberle, S.; Yu, H. Compact contact sets of sub-quadratic solutions to the thin obstacle problem. {\em Preprint: arXiv:2304.03939}.

\bibitem[FSa]{FSa} Friedman, S.; Sakai, M. A characterization of null quadrature domains in $\R^N$. {\em Indiana Univ. Math. J.} 35 (1986), no. 3, 607-610.


\bibitem[KM]{KM} Karp, L.; Margulis, A. Newtonian potential theory for unbounded sources and applications to free boundary problems. {\em J. Anal. Math.} 70 (1996), 1-63.

\bibitem[L]{L} Lee, K. Obstacle problems for the fully nonlinear elliptic operators, {\em Thesis (PhD), New York University}, 1998. 

\bibitem[LLY]{LLY} Li, D.; Li, Z.; Yuan, Y. A Bernstein problem for special Lagrangian equations in exterior domains. {\em Adv. Math.} 361 (2020), 106927, 29pp.

\bibitem[PSU]{PSU} Petrosyan, A.; Shahgholian, H.; Uraltseva, N. Regularity of free boundaries in obstacle-type problems. Graduate Studies in Mathematics, 136. {\em American Mathematical Society, Providence, RI, 2012.} 

\bibitem[Sa]{Sa} Sakai, M. Null quadrature domains. {\em J. Analyze Math.} 40 (1981), 144-154.


\bibitem[SY1]{SY1} Savin, O.; Yu, H. Regularity of the singular set in the fully nonlinear obstacle problem. {\em J. Eur. Math. Soc.} 25 (2023), no. 2, 571-610.

\bibitem[SY2]{SY2} Savin, O.; Yu, H. On the fine regularity of the singular set in the nonlinear obstacle problem. {\em Nonlin. Anal.} 218 (2022).

\bibitem[Sh]{Sh} Shahgholian, H. On quadrature domains and the Schwartz potential. {\em J. Math. Anal. Appl.} 171 (1992), no. 1, 61-78.



\end{thebibliography}
\end{document}